\newcommand{\f}{\mathcal{F}}
\newcommand{\nn}{\mathcal{N}}
\newcommand{\bC}{\mathbb{C}}
\newcommand{\cE}{\mathcal{E}}
\newcommand{\cG}{\mathcal{G}}
\newcommand{\cN}{\mathcal{N}}
\newcommand{\cO}{\mathcal{O}}
\newcommand{\fX}{\mathfrak{X}}
\newcommand{\ca}[1]{\mathcal{#1}}
\newcommand{\cha}{\textrm{char}}
\newcommand{\Perv}{\mathrm{Perv}}
\newcommand{\remove}[1]{ }
\newcommand{\fg}{\mathfrak{g}}
\newcommand{\en}{\mathrm{End}}
\newcommand{\bZ}{\mathbb{Z}}
\newcommand{\bk}{\mathbbm{k}}
\newcommand{\maps}{\rightarrow}
\newcommand{\ind}{\mathrm{Ind}}
\newcommand{\res}{\mathrm{Res}}
\newcommand{\pres}{{}' \mathrm{Res}}
\newcommand{\Hom}{\mathrm{Hom}}
\newcommand{\cusp}{\mathrm{cusp}}
\newcommand{\Rep}{\mathrm{Rep}}
\newcommand{\Stab}{\mathrm{Stab}}
\newcommand{\simto}{\xrightarrow{\sim}}
\newcommand{\Db}{D^{\mathrm{b}}}
\newcommand{\fR}{\mathfrak{R}}
\newcommand{\fL}{\mathfrak{L}}
\newcommand{\IC}{\mathcal{IC}}
\newcommand{\rZ}{\mathrm{Z}}
\newcommand{\rN}{\mathrm{N}}
\newcommand{\rA}{\mathrm{A}}
\newtheorem{theorem}{Theorem}[section]
\newtheorem{lemma}[theorem]{Lemma}
\newtheorem{definition}[theorem]{Definition}
\newtheorem{proposition}[theorem]{Proposition}
\newtheorem{corollary}[theorem]{Corollary}
\theoremstyle{remark}
\newtheorem{remark}[theorem]{Remark}
\newtheorem{example}[theorem]{Example}
\pgfplotsset{compat=1.16}
\title{On the modular generalized Springer Correspondence for disconnected groups}
\author{Kostas I. Psaromiligkos}
\address{Universit\'e Clermont Auvergne, CNRS, LMBP, F-63000 Clermont-Ferrand, France.}
\email{konstantinos.psaromiligkos@uca.fr}
\author{Simon Riche}
\address{Universit\'e Clermont Auvergne, CNRS, LMBP, F-63000 Clermont-Ferrand, France.}
\email{simon.riche@uca.fr}
\date{}
\keywords{} 
\begin{document}
	
	\begin{abstract}
	We study the construction of a modular generalized Springer correspondence for a possibly disconnected complex reductive algebraic group.
	\end{abstract}
	
	\maketitle
	
	
	\section{Introduction}

	\subsection{Springer correspondences}

The Springer correspondence is a very classical and important geometric construction that relates equivariant perverse sheaves on the nilpotent cone of a complex connected reductive algebraic group and representations of its Weyl group. (In both cases, the coefficient field $\bk$ is a field of characteristic $0$, at least in the initial version.) This subject was invented by Springer~\cite{springer}, but it has seen many useful reformulations, the one most relevant for us by Lusztig~\cite{lusztig-green}. 

There are by now many generalizations of this construction. The ones relevant for this paper are first the \emph{generalized Springer correspondence} due to Lusztig~\cite{Lusztig1984}, which provides a way to ``complete'' the injection constructed by Springer into a bijection between appropriate sets, and secondly the adaptation of Lusztig's methods to the setting where $\bk$ can possibly have positive characteristic, see~\cite{Achar2016a,Achar2017,Achar2017a}. 

The main goal of the present paper is to provide the first steps towards a further generalization of these results, that allows $G$ to be a \emph{possibly disconnected} reductive algebraic group. This setting, but with $\bk$ of characteristic $0$, has already been studied in the literature by Aubert--Moussaoui--Solleveld~\cite{Aubert2018} and Dillery--Schwein~\cite{Dillery2024}, with (partly conjectural) applications to the Langlands correspondence for representations of $p$-adic groups, from which we draw a lot of inspiration.

\begin{remark}
Another form of the (generalized) Springer correspondence for disconnected reductive groups was constructed earlier in~\cite{lusztig-char-sheaves,sorlin}, with applications to representations of finite groups of Lie type. In this version it is important to work in the setting where $G$ is defined over a field of positive characteristic, to allow the unipotent variety to be possibly disconnected, but one mainly considers the action of $G^\circ$. These two directions are unrelated as far as we understand, and the one we follow is that of~\cite{Aubert2018}.
\end{remark}

	\subsection{Statement}

	Let $G$ be a possibly disconnected complex reductive algebraic group, and consider its identity component $G^{\circ}$, and their common Lie algebra $\fg$. Let $\nn_G\subseteq \fg$ be the nilpotent cone of $G$. We consider the category $\Perv_G(\nn_G,\bk)$ of $G$-equivariant perverse sheaves on $\nn_G$ with coefficients in an algebraically closed\footnote{In the body of the paper we in fact consider a general field of coefficients, but we assume it is algebraically closed in this introduction to avoid some irrelevant subtleties related to fields of definition of representations.} field $\bk$. 

	Note that the nilpotent cones of $G$ and $G^\circ$ coincide; the difference with the case of connected groups therefore only lies in the equivariance condition.
	
	By the general theory of perverse sheaves, the isomorphism classes of simple objects of $\Perv_G(\nn_G,\bk)$ are in a canonical bijection with the set $\fR_{G,\bk}$ of equivalence classes of pairs $(\ca O, \ca E)$, where $\ca O \subseteq \nn_G$ is a $G$-orbit and $\ca E$ is an irreducible $G$-equivariant $\bk$-local system on $\ca O$, and the goal of the generalized Springer correspondence is to describe this set in terms of simple representations of appropriate generalizations of the Weyl group of $G^\circ$. (The equivalence relation considered here corresponds to isomorphisms of local systems.) 

More specifically, we have ``induction'' and ``restriction'' functors that relate $\fR_{G,\bk}$ with its analogues for Levi subgroups, which give rise to a notion of cuspidality. Then the statement can be broken into two steps:
\begin{enumerate}
\item
\label{it:intro-partition}
\emph{Partition into induction series}. Here one constructs a partition of $\fR_{G,\bk}$ into subsets parametrized by $G$-conjugacy classes of ``cuspidal triples,'' i.e.~triples $(L,\cO,\cE)$ where $L$ is a Levi subgroup of $G$ and $(\cO,\cE)$ is a pair in $\fR_{L,\bk}$ which is cuspidal.
\item
\label{it:intro-parametrization}
\emph{Parametrization of induction series}. Here, given a cuspidal triple $(L,\cO,\cE)$, one constructs a bijection between the associated subset of $\fR_{G,\bk}$ and some class of simple representations of an appropriate ``generalized Weyl group'' attached to $G$ and $(L,\cO,\cE)$.
\end{enumerate}

In the setting of this paper, we provide a satisfactory version of~\eqref{it:intro-partition} in Theorem~\ref{disj}. Essentially the only subtlety here is to choose the appropriate notion of ``Levi subgroup'' for a disconnected reductive algebraic group. But this question has already been tackled in~\cite{Aubert2018}, which introduced a notion of \emph{special} Levi subgroups.\footnote{Our terminology differs from that adopted in~\cite{Aubert2018}, where such Levi subgroups are called quasi-Levi subgroups; see Footnote~\ref{fn:Levi}.} (Note that there is some flexibility in the choice of the class of Levi subgroups. We believe the class of special Levi subgroups is the most natural one, but our methods apply for some other classes, see Remark~\ref{rmk:parabolics}.) The partition of $\fR_{G,\bk}$ is then obtained in a way completely parallel to that considered in~\cite{Achar2017a} in the connected case, or~\cite{Dillery2024} for characteristic-$0$ coefficients. It is based on a ``Mackey formula'' (sometimes called ``geometrical lemma''), for which the geometric input is developed in sufficient generality in~\cite{Dillery2024} to be directly applied here.

Our version of~\eqref{it:intro-parametrization} is slightly less satisfactory. This is related to a true additional difficulty that arises in the disconnected setting, already observed in~\cite{Aubert2018}. Namely, in the connected case, one constructs a canonical bijection between the part of $\fR_{G,\bk}$ corresponding to $(L,\cO,\cE)$ and the isomorphism classes of simple $\bk$-representations of the finite group $\rN_G(L)/L$. A first (minor) difficulty in the disconnected setting is that the latter group should be replaced by its subgroup defined as the stabilizer of the pair $(\cO,\cE)$, see~\S\ref{ss:Weyl-gps}. (In the connected case this stabilizer is always the whole of $\rN_G(L)/L$, but this is no longer true in the disconnected case.) A more important difficulty is that representations should be replaced by modules over a certain twisted group algebra, in which the twisting can \emph{really} be nontrivial. (See~\S\ref{ss:example-cocyle} for a concrete example, reproduced from~\cite{Aubert2018}.) The involved cocycle is still rather mysterious, even in the setting of characteristic-$0$ coefficients, and unfortunately our results do not contribute to a clarification of this question. What we do is constructing a canonical bijection between the part of $\fR_{G,\bk}$ associated with $(L,\cO,\cE)$ and simple modules over a certain algebra, which we show is isomorphic (but not in a particularly canonical way) to a twisted group algebra of the stabilizer of $(\cO,\cE)$ in $\rN_G(L)/L$; see Theorem~\ref{thm:parind}.

Improving the answer to~\eqref{it:intro-parametrization} should be the next step in the development of the generalized Springer correspondence for disconnected groups.

	\subsection{Outline}

In Section~\ref{sec:preliminaries} we gather some generalities related to Clifford theory for finite groups. In Section~\ref{sec:definitions} we define the notion of cuspidality, and the induction series. The definition of these series involves choices of parabolic subgroups that have the corresponding Levi subgroups as Levi factors, and one needs to prove that they are in fact independent of these choices; this fact is proved in~\S\ref{ss:Fourier} following the same strategy as for the connected case in~\cite{Achar2016a}, whose main ingredient is a Fourier transform. In Section~\ref{sec:partition} we prove a Mackey formula, and deduce the fact that induction series provide a partition of $\fR_{G,\bk}$. Finally, in Section~\ref{sec:parametrization} we explain what can be said about the parametrization of induction series; our method here is to translate the problem in terms of finite groups, where we apply the results of Section~\ref{sec:preliminaries}.

	\subsection{Acknowledgements}
	
We thank Anne-Marie Aubert, Peter Dillery and David Schwein for their interest and helpful discussions, and Abel Lacabanne for his help with the material of~\S\ref{ss:simple-subobjs-ind-rep}. We also thank Maarten Solleveld for the useful correspondence, and in particular for pointing out a mistake in an earlier version of this paper and showing us Example~\ref{ex:weyl}.
	
This project has received funding from the European Research Council (ERC) under the European Union’s Horizon 2020 research and innovation programme (grant agreement No.~101002592).

	\section{Preliminaries on representations of finite groups}
	\label{sec:preliminaries}

In this section we collect some elementary results on representation theory of finite groups. The proofs are easy exercises in Clifford theory.

\subsection{Induction and restriction from normal subgroups}
\label{ss:ind-res-normal}

We fix a field $\bk$. For any finite group $K$, we will denote by $\Rep_\bk(K)$ the category of finite-dimensional representations of $K$ over $\bk$.

If $H$ is a finite group and $N \subseteq H$ a subgroup,
we have an exact restriction functor
\[
\res^H_N : \Rep_\bk(H) \to \Rep_\bk(N).
\]
We also have an exact induction functor
\[
\ind_N^H := \bk H \otimes_{\bk N} (-) : \Rep_\bk(N) \to \Rep_\bk(H)
\]
which is both left and right adjoint to $\res^H_N$, see~\cite[(10.8) and (10.21)]{Curtis1990}.

In practice we will assume that $N$ is normal. In this case we have the following statements about the behavior of these functors with respect to semisimplicity.

\begin{lemma}
\phantomsection
\label{lem:ind-res-ss-gps}
\begin{enumerate}
\item
\label{it:res-ss-gps}
If $V \in \Rep_\bk(H)$ is simple, then $\res^H_N(V)$ is semisimple.
\item
\label{it:ind-ss-gps}
If $V \in \Rep_\bk(N)$ is simple and $\mathrm{char}(\bk)$ does not divide $|H/N|$, then $\ind^H_N(V)$ is semisimple.
\end{enumerate}
\end{lemma}

\begin{proof}
\eqref{it:res-ss-gps}
This is a classical result of Clifford, see~\cite[Theorem~11.1]{Curtis1990}.

\eqref{it:ind-ss-gps}
This is also classical, see e.g.~\cite[Chap.~2, Proposition~2.13]{karpilovsky2}.
\end{proof}

\begin{corollary}
\label{cor:subobjs-quotients}
Let $V \in \Rep_\bk(N)$. Then the sets of isomorphism classes of simple subobjects and of simple quotients of $\ind^H_N(V)$ coincide; in fact they both consist of the simple $H$-modules $V'$ such that $\res^H_N(V')$ contains $V$ as a direct summand.
\end{corollary}

\begin{proof}
A simple representation $V'$ of $H$ is isomorphic to a subobject, resp.~quotient, of $\ind^H_N(V)$, if and only if $\Hom_H(V', \ind^H_N(V)) \neq 0$, resp.~$\Hom_H(\ind^H_N(V), V') \neq 0$. Hence the claim follows from (bi)adjunction and the semisimplicity of $\res^H_N(V')$, see Lemma~\ref{lem:ind-res-ss-gps}\eqref{it:res-ss-gps}.
\end{proof}

\subsection{Simple subobjects of induced representations}
\label{ss:simple-subobjs-ind-rep}

We continue to consider a normal subgroup $N$ of a finite group $H$, and a field $\bk$. Our goal is to obtain a parametrization of the sets appearing in Corollary~\ref{cor:subobjs-quotients}.

If $V \in \Rep_\bk(N)$ and $g \in H$ we have the $N$-module ${}^g V$ whose underlying vector space is $V$, and the action $\cdot^g$ of $N$ is defined by $h \cdot^g v = (g^{-1} h g) \cdot v$ for $v \in V$ and $h \in N$. Up to isomorphism, this representation only depends on the coset $gN$. Therefore, the subgroup $\Stab_H(V)=\{h \in H \mid {}^h V \cong V\}$ contains $N$. It is a standard observation that, in this setting, if $S \subset H$ is a subset of representatives for the cosets in $H/N$, then we have a canonical isomorphism
\begin{equation}
\label{eqn:decomp-res-ind}
\res^H_N(\ind_N^H(V)) \cong \bigoplus_{s \in S} {}^s V.
\end{equation}
More specifically, as vector spaces we have $\ind_N^H(V) = \bigoplus_{s \in S} \bk \cdot s \otimes V$, and $\bk \cdot s \otimes V$ is an $N$-stable subspace which identifies with ${}^s V$. More generally, if $K \subseteq H$ is a subgroup containing $N$ and $V \in \Rep_\bk(K)$, if $S \subset H$ is a subset of representatives for the cosets in $H/K$, then we have a canonical isomorphism
\begin{equation}
\label{eqn:decomp-res-ind-2}
\res^H_N(\ind_K^H(V)) \cong \bigoplus_{s \in S} {}^s (\res^K_N(V)).
\end{equation}

The following lemma will serve as an ingredient in the proof of Proposition~\ref{prop:bijection-simple-subreps} below.
	
\begin{lemma}
\label{lem:subreps-stabilizer}

Let $V$ be a simple $N$-module, and
set $K=\Stab_H(V)$. Then the assignment $V' \mapsto \ind_K^H(V')$ induces a bijection between the sets of isomorphism classes of simple $K$-subrepresentations of $\ind_N^K(V)$ and of simple $H$-subrepresentations of $\ind_N^H(V)$.
\end{lemma}

\begin{proof}
First we observe that the assignment $V' \mapsto \ind_K^H(V')$ induces an injective map from the set of isomorphism classes of simple $K$-representations which, as $N$-modules, are direct sums of copies of $V$, to the set of isomorphism classes of simple $H$-modules. In fact, let $V'$ be a simple $K$-representation such that $\res^K_N(V')$ is a direct sum of copies of $V$, and let $V'' \subset \ind_K^H(V')$ be a nonzero $H$-stable subspace. 
By~\eqref{eqn:decomp-res-ind-2}, $\res^H_N(V'')$ is a direct sum of representations of the form ${}^g V$ with $g \in H$. Since this representation is a restriction of an $H$-module, it must contain a subrepresentation isomorphic to $V$, hence $V''$ intersects the $V$-isotypic component of $\res^H_N (\ind_K^H(V'))$. Now, again by~\eqref{eqn:decomp-res-ind-2} and the definition of $K$, this isotypic component identifies with the image of the adjunction morphism $V' \to \res^H_K (\ind_K^H(V'))$. By simplicity of $V'$, $V''$ therefore contains this image. Finally, since it is $H$-stable we must have $V''=\ind_K^H(V')$, proving that our map is well defined. To prove that it is injective, one simply notes that if $V'$ and $V''$ are two $K$-representations such that $\res^K_N(V')$ and $\res^K_N(V'')$ are direct sums of copies of $V$, any isomorphism $\ind_K^H(V') \simto \ind_K^H(V'')$ must restrict to an isomorphism between the $V$-isotypic components of $\res^H_N(\ind_K^H(V'))$ and $\res^H_N(\ind_K^H(V''))$, which as explained above identify with $V'$ and $V''$ respectively.

Once this claim is established, we observe that, by~\eqref{eqn:decomp-res-ind}, $\ind_N^K(V)$ is isomorphic, as an $N$-representation, to a direct sum of copies of $V$; hence so does any of its $K$-subrepresentations. Using also exactness and transitivity of induction, this shows that the map of the lemma is well defined and injective. To prove surjectivity, we observe that any simple $H$-subrepresentation $V'$ of $\ind_N^H(V)$ must contain a sub-$N$-representation isomorphic to $V$, see Corollary~\ref{cor:subobjs-quotients}. Then, by Clifford's theorem (see~\cite[Chap.~2, Theorem~2.2(iii)]{karpilovsky2} or~\cite[Theorem~11.1]{Curtis1990}, $V'$ is isomorphic to the image under $\ind_K^H$ of its $V$-isotypic component (as an $N$-representation), and the latter is a simple $K$-representation. This isotypic component must be contained in that of $\ind_N^H(V)$, which identifies with $\ind_N^K(V)$ by~\eqref{eqn:decomp-res-ind}, concluding the proof.

\end{proof}

In the next statement we use the notion of crossed product of a group over an algebra, which generalizes twisted group algebras to the setting where the base field (or ring) can have a nontrivial action of the group; see~\cite[Chap.~3, \S 3]{karpilovsky} for details.

\begin{proposition}
\label{prop:bijection-simple-subreps}

Let $V$ be a simple $N$-module, and set $K=\Stab_H(V)$.

\begin{enumerate}
\item
\label{it:crossed-product}
The algebra $\en_H(\ind_N^H(V))$ is a crossed product of $K/N$ over the division algebra $\en_N(V)$.
\item
\label{it:bijection-simple-subreps}
There exists a canonical bijection between the sets of isomorphism classes of simple subrepresentations of $\ind_N^H(V)$ and of isomorphism classes of simple modules over the algebra $\en_H(\ind_N^H(V))$.
\end{enumerate}

\end{proposition}
	
\begin{proof}
\eqref{it:crossed-product}
As explained in~\cite[Chap.~3, Lemma~12.1]{karpilovsky},
the algebra morphism
\begin{equation}
\label{eqn:isom-End}
\en_K(\ind_N^K(V)) \to \en_H(\ind_N^H(V))
\end{equation}
induced by the functor $\ind_K^H$
is an isomorphism. 
 
Then the claim follows from~\cite[Chap.~3, Theorem~4.2(ii)]{karpilovsky}.

\eqref{it:bijection-simple-subreps}
Using the isomorphism~\eqref{eqn:isom-End} and Lemma~\ref{lem:subreps-stabilizer}, to prove the proposition it suffices to construct a canonical bijection between the sets of isomorphism classes of simple subrepresentations of $\ind_N^K(V)$ and of simple modules over $\en_K(\ind_N^K(V))$.

By~\cite[Theorem~11.17]{Curtis1990}, there exists a functorial isomorphism between the lattices of left ideals in $\en_K(\ind_N^K(V))$ and of $K$-subrepresentations of $\ind_N^K(V)$. This bijection necessarily induces a bijection between the sets of isomorphism classes of simple $\en_K(\ind_N^K(V))$-submodules of $\en_K(\ind_N^K(V))$ and the set of simple $K$-subrepresentations of $\ind_N^K(V)$, so that to conclude it suffices to show that any simple $\en_K(\ind_N^K(V))$-module injects in the free rank-$1$-module. This property follows from the fact that $\en_K(\ind_N^K(V))$ is a symmetric algebra, see~\cite[Chap.~3, Theorem~12.3]{karpilovsky}.
\end{proof}

\begin{remark}
\label{rmk:twisted-gp-alg}
The assertion in Proposition~\ref{prop:bijection-simple-subreps}\eqref{it:crossed-product} means in particular that
the algebra $\en_H(\ind_N^H(V))$ admits a canonical $K/N$-grading, with degree-$1$ component the division algebra $\en_N(V)$. This grading can be described explicitly as follows.
Recall from~\eqref{eqn:isom-End} that we have an algebra isomorphism
\[
\en_H(\ind_N^H(V)) \cong \en_K(\ind_N^K(V));
\]
moreover, by adjunction the right-hand side identifies with $\Hom_N(V,\res^K_N(\ind_N^K(V)))$  (as a $\bk$-vector space). Now, by~\eqref{eqn:decomp-res-ind} $\res^K_N(\ind_N^K(V))$ has a canonical decomposition as a direct sum parametrized by $K/N$, with each factor (noncanonically) isomorphic to $V$. 
We deduce a canonical decomposition of $\en_H(\ind_N^H(V))$ as a direct sum of subspaces parametrized by $K/N$, as desired. (All of these subspaces are noncanonically isomorphic to $\en_N(V)$.)

Assuming that $V$ is absolutely irreducible, so that $\en_N(V)=\bk$, this description simplifies, and reduces to a twisted group algebra for $K/N$ over $\bk$ in the sense of~\cite[Example~8.33(ii)]{Curtis1990} or~\cite[Chap.~3, \S 6]{karpilovsky}.
	\end{remark}

	\section{Definitions}
	\label{sec:definitions}

\subsection{Parabolic and Levi subgroups}

As usual, given a complex algebraic group $H$, we will denote by $H^\circ$ its neutral connected component, and by $\rZ(H)$ its center. Given a subset $K \subset H$, we denote by $\rZ_H(K)=\{h \in H \mid \forall k \in K, hkh^{-1} = k\}$ the centralizer of $K$ in $H$, and by $\rN_H(K)=\{h \in H \mid hKh^{-1} = K\}$ its normalizer.

Let $G$ be a (possibly disconnected) complex reductive algebraic group. Following~\cite[\S 2.3]{Dillery2024}, we say that a subgroup $P$ of $G$ is \emph{parabolic} if the quotient variety $G/P$ is proper, or equivalently if $P^\circ$ is a parabolic subgroup (in the usual sense) of the connected reductive algebraic group $G^\circ$. See~\cite[\S 2.3]{Dillery2024} for a combinatorial description of conjugacy classes of such subgroups. If $P$ is a parabolic subgroup of $G$, with unipotent radical $U$, then a Levi factor of $P$ is a subgroup $L$ such that $P=L \ltimes U$. Given $P$, such a subgroup always exists; see again~\cite[\S 2.3]{Dillery2024} for details. For simplicity, we will use the terminology ``Levi subgroup'' of $G$ for ``Levi factor of a parabolic subgroup'' of $G$.
	

A Levi subgroup $L$ of $G$ will be called \emph{special}\footnote{Our terminology is not the same as that used in~\cite{Aubert2018} and~\cite{Dillery2024}, where these subgroups are called quasi-parabolic subgroups. We find this terminology misleading as special parabolic subgroups are really \emph{special cases} of parabolic subgroups. \label{fn:Levi}} if we have
\[
L=\rZ_G(\rZ(L^{\circ})^{\circ}).
\]
Note that in case $G$ is connected all Levi subgroups are special.
In general, as explained in~\cite[\S 2.6]{Dillery2024}, the assignments $L \mapsto L^\circ$ and $M \mapsto \rZ_G(\rZ(M)^{\circ})$ induce ($G$-equivariant) bijections between the sets of special Levi subgroups of $G$ and Levi subgroups of $G^\circ$. If $P$ is a parabolic subgroup of $G$, we will say that $P$ is special if one (equivalently, all) of its Levi factors is special.

\begin{remark}
\label{rmk:parabolics}
In this paper we will mainly work with the special parabolic subgroups of $G$, because we believe that this is the most natural class of parabolic subgroups from the point of view of the generalized Springer correspondence (in particular, in terms of its inductive nature). This class is also the one emphasized in~\cite{Aubert2018, Dillery2024} because it is the most natural for applications in the Langlands program. It is however noted in~\cite[\S 2.6]{Dillery2024} that many constructions related to the generalized Springer can be stated for other classes of parabolic subgroups. One can easily check that all our considerations below apply if the condition of being special is replaced by the condition of belonging to a class $\mathcal{P}$ of parabolic subgroups of all reductive algebraic groups which satisfies the following conditions:
\begin{enumerate}
\item
$\mathcal{P}$ is admissible in the sense of~\cite[\S 2.6]{Dillery2024};
\item
for any reductive group $G$, the assignment $P \mapsto P^\circ$ induces a bijection between parabolic subgroups of $G$ which belong to $\mathcal{P}$ and parabolic subgroups of $G^\circ$;
\item
for any parabolic subgroup $P \subseteq G$ which belongs to $\mathcal{P}$, if $L$ is a Levi factor of $P$, $L$ is normal in $\rN_G(L^\circ)$.
\end{enumerate}
(Here, the first condition is necessary in the application of the Mackey formula. The second condition is necessary for the definition of cuspidality to be reasonable, see~\cite[Lemma~2.6.6]{Dillery2024}. The third condition is used in the proof of Proposition~\ref{prop:Galois-covering} below.) We can think of only two natural classes of parabolic subgroups which satisfy these conditions, namely special parabolic subgroups and parabolic subgroups of neutral components. Of these two, the first one seems the most natural. But it might be useful to allow more general classes in the theory.
\end{remark}


We will say that a parabolic subgroup $P$ of $G$ is \emph{maximal} if $P^\circ = G^\circ$. By the comments above, there exists exactly one parabolic subgroup which is both special and maximal, but this subgroup is not necessarily $G$ (since the latter is not necessarily a special parabolic subgroup). This subgroup \emph{is} $G$ in case $G$ is semisimple, or if $G$ is itself a special Levi subgroup in a larger reductive algebraic group $H$.

We record the following property for later use.

\begin{lemma}
\label{lem:conn-comp-parabolic}
If $P \subseteq G$ is a parabolic subgroup, the induced morphism $P/P^\circ \to G/G^\circ$ is injective.
\end{lemma}

\begin{proof}
This statement amounts to saying that $P \cap G^\circ = P^\circ$. However $P \cap G^\circ$ is a parabolic subgroup of $G^\circ$, in which $P^\circ$ has finite index, so it must coincide with $P^\circ$.
\end{proof}
	
%

	\subsection{Equivariant sheaves on nilpotent cones}

From now on we fix a field $\bk$ of coefficients for our (complexes of) sheaves, and continue with our possibly disconnected complex reductive algebraic group $G$.
For a complex algebraic group $H$, we will denote by $\mathfrak{h}$ its Lie algebra, and by $\nn_H$ the nilpotent cone of $H$, i.e.~the closed subvariety of $\mathfrak{h}$ consisting of nilpotent elements. Note that we have $\nn_H=\nn_{H^\circ}$. For $x \in \mathfrak{h}$ we will denote by $\rZ_H(x)$ the centralizer of $x$ in $H$, and set $\rA_H(x) := \rZ_H(x)/\rZ_H(x)^\circ$.

The main player in this paper will be the derived category $\Db_G(\nn_G,\bk)$ of constructible sheaves on the quotient stack $G \backslash \nn_G$, with coefficients in $\bk$. This category can also be described in terms of the Bernstein--Lunts construction, see~\cite[Chap.~6]{Achar2021}. We have a canonical ``forgetful'' functor
\[
\mathrm{For}^G : \Db_G(\nn_G,\bk) \to \Db_{\mathrm{c}}(\nn_G,\bk)
\]
where the right-hand side is the constructible derived category of sheaves on $\nn_G$. The category $\Db_G(\nn_G,\bk)$ is equipped with a natural t-structure such that $\mathrm{For}^G$ is t-exact with respect to the perverse t-structure, see e.g.~\cite[Theorem~6.4.10]{Achar2021}. This t-structure is also called the perverse t-structure, and its heart will be denoted $\Perv_G(\nn_G,\bk)$. In fact, by~\cite[Theorem~6.4.10]{Achar2021} and its proof, this heart consists precisely of the objects of $\Db_G(\nn_G,\bk)$ whose image in $\Db_{\mathrm{c}}(\nn_G,\bk)$ is perverse, and it can equivalently be described as the category of $G$-equivariant perverse sheaves on $\nn_G$ in the sense of~\cite[Definition~6.2.3]{Achar2021}. In case $G$ is connected the restriction of the functor $\mathrm{For}^G$ to $\Perv_G(\nn_G,\bk)$ is fully faithful (so that a $G$-equivariant perverse sheaf is a perverse sheaf with a certain \emph{property}), and its essential image is stable under subquotients, but in general these properties are not true (in particular, a $G$-equivariant perverse sheaf should be considered as a perverse sheaf with a certain additional \emph{structure}).

As in the introduction, we will denote by $\fR_{G,\bk}$ the set of isomorphism classes of simple objects of $\Perv_G(\nn_G,\bk)$. By the general theory of perverse sheaves, every object in $\Perv_G(\nn_G,\bk)$ has finite length, and $\fR_{G,\bk}$ identifies with the quotient of the set of pairs $(\cO,\cE)$ where $\cO \subset \nn_G$ is a $G$-orbit and $\cE$ is a simple $G$-equivariant $\bk$-local system on $\cO$, by the equivalence relation given by $(\cO,\cE) \sim (\cO',\cE')$ if $\cO=\cO'$ and $\cE \cong \cE'$. This bijection sends the equivalence class of a pair $(\cO,\cE)$ to the isomorphism class of the intersection cohomology complex $\IC(\cO,\cE)$. We will freely identify these two sets in the rest of the paper. Recall also that given a $G$-orbit $\cO \subset \nn_G$ and a point $x \in \cO$, restriction to $x$ induces an equivalence between the category of $G$-equivariant $\bk$-local systems on $\cO$ and $\Rep_\bk(\rA_G(x))$, see~\cite[Proposition~6.2.13]{Achar2021}.

Let us now study the relation between the categories $\Db_G(\nn_G,\bk)$ and $\Db_{G^\circ}(\nn_{G^\circ},\bk)$.
As explained above $G$ and $G^\circ$ share the same nilpotent cone, so that we have a canonical t-exact forgetful functor
\[
\mathrm{For}^G_{G^\circ} : \Db_G(\nn_G,\bk) \to \Db_{G^\circ}(\nn_{G},\bk).
\]
In the language of stacks, this functor is the ($*$- or $!$-, equivalently) pullback under the canonical morphism $G^\circ \backslash \nn_G \to G \backslash \nn_G$. It admits a left and right adjoint
\[
\gamma_{G^\circ}^G : \Db_{G^\circ}(\nn_{G},\bk) \to \Db_{G}(\nn_{G},\bk),
\]
given by ($*$- or $!$-, equivalently) pushforward under this map.


\begin{lemma}
\phantomsection
\label{lem:ind-res-ss}
\begin{enumerate}
\item
\label{it:rest-ss}
The functor $\mathrm{For}^G_{G^\circ} : \Perv_G(\nn_G,\bk) \to \Perv_{G^\circ}(\nn_{G^\circ},\bk)$ sends semisimple objects to semisimple objects.
\item
\label{it:ind-exact-ss}
The functor $\gamma_{G^\circ}^G : \Db_{G^\circ}(\nn_{G^\circ},\bk) \to \Db_{G}(\nn_{G},\bk)$ is t-exact.
If $\mathrm{char}(\bk)$ does not divide $|G/G^\circ|$, this functor sends semisimple perverse sheaves to semisimple perverse sheaves.
\end{enumerate}
\end{lemma}

\begin{proof}
\eqref{it:rest-ss}
It suffices to prove that for any $G$-orbit $\cO \subset \nn_G$ and any simple $G$-equivariant $\bk$-local system $\cE$ on $\cO$, the perverse sheaf $\mathrm{For}^G_{G^\circ}(\IC(\cO,\cE))$ is semisimple. Consider the decomposition $\cO = \cO_1 \sqcup \cdots \sqcup \cO_r$ of $\cO$ into $G^\circ$-orbits, and choose for any $i$ a point $x_i \in \cO_i$. Then we have
\begin{equation}
\label{eqn:For-IC}
\mathrm{For}^G_{G^\circ}(\IC(\cO,\cE)) = \bigoplus_{i=1}^r \IC(\cO_i,\cE_{|\cO_i}).
\end{equation}
Moreover, if $i \in \{1, \dots, r\}$,
denoting by $V_i$ the representation of $\rA_G(x_i)$ corresponding to the $G$-equivariant local system $\cE$ on $\cO = G \cdot x_i$, the restriction $\cE_{|\cO_i}$ is the $G^\circ$-equivariant local system on $\cO_i = G^\circ \cdot x_i$ corresponding to the restriction of $V_i$ to the normal subgroup $\rA_{G^\circ}(x_i) $. (To make sense of this we use the fact that $\rZ_{G}(x_i)^\circ = \rZ_{G^\circ}(x_i)^\circ$.) Hence the desired claim follows from Lemma~\ref{lem:ind-res-ss-gps}\eqref{it:res-ss-gps}

\eqref{it:ind-exact-ss}
It suffices to prove that for any $G^\circ$-orbit $\cO \subset \nn_G$ and any simple $G^\circ$-equivariant $\bk$-local system $\cE$ on $\cO$, the complex $\gamma_{G^\circ}^G(\IC(\cO,\cE))$ is perverse, and semisimple when $\mathrm{char}(\bk)$ does not divide $|G/G^\circ|$. Fix such data, and choose $x \in \cO$. Concretely, setting $\cO':=G \cdot x$, this complex is obtained by taking pushforward under the canonical morphism $f : G \times^{G^\circ} \overline{\cO} \to \overline{\cO'}$ of the intersection cohomology complex $\IC(G \times^{G^\circ} \cO, \cE')$ where $\cE'$ is the $G$-equivariant local system on $G \times^{G^\circ} \cO$ corresponding to $\cE$ under the canonical equivalence between $G$-equivariant local systems on $G \times^{G^\circ} \cO$ and $G^\circ$-equivariant local systems on $\cO$. Now this map is finite, and its restriction to $G \times^{G^\circ} \cO = f^{-1}(\cO')$ is a Galois covering with Galois group $\rZ_{G}(x)/\rZ_{G^\circ}(x)$. Hence, as e.g.~in~\cite[Proposition~3.8.10]{Achar2021} $f_* \IC(G \times^{G^\circ} \cO, \cE')$ is the intersection cohomology complex of the local system on $\cO'$ obtained from $\cE'$ by pushforward along the covering $G \times^{G^\circ} \cO \to \cO'$. In particular, this complex is perverse. In terms of representations of finite groups, this pushforward corresponds to the representation of $\rA_G(x)$ obtained from the representation of $\rA_{G^\circ}(x)$ associated with $\cE$ by induction. Hence the semisimplicity claim follows from Lemma~\ref{lem:ind-res-ss-gps}\eqref{it:ind-ss-gps}.
(Here $\rA_G(x) / \rA_{G^\circ}(x)$ is a subgroup of $G/G^\circ$, hence its order is prime to $\mathrm{char}(\bk)$.)
\end{proof}

\begin{remark}
Of course, the assumption in the second statement of Lemma~\ref{lem:ind-res-ss}\eqref{it:ind-exact-ss} is necessary, as can be seen already when $G$ is a finite group.
\end{remark}

	\subsection{Induction and restriction functors}
	
Let $P$ be a parabolic subgroup of $G$, and $L \subset P$ be a Levi factor. Then $L$ is also naturally a quotient of $P$, and we consider the associated correspondence of stacks
\begin{equation}
\label{eqn:correspondence-ind-res}
L \backslash \nn_L \xleftarrow{q} P \backslash \nn_P \xrightarrow{i} G \backslash \nn_G.
\end{equation}
We define the associated induction and restriction functors as follows:
\begin{align}
	&\ind_{L\subseteq P}^G:=i_! q^* : \Db_L(\nn_L,\bk)\to \Db_G(\nn_G,\bk), \\
	&\res_{L\subseteq P}^G:= q_! i^*: \Db_G(\nn_G,\bk) \to \Db_L(\nn_L,\bk), \\
	&\pres_{L\subseteq P}^G:= q_* i^!: \Db_G(\nn_G,\bk) \to \Db_L(\nn_L,\bk).
	\end{align}

\begin{remark}
\label{rmk:ind-res}
In practice we will mainly use these functors in case $L$ is special, but it can sometimes be convenient to consider them for more general parabolic subgroups. For instance, for $P=G^\circ$ we have $\res_{G^\circ \subseteq G^\circ}^G=\pres_{G^\circ \subseteq G^\circ}^G = \mathrm{For}^G_{G^\circ}$ and $\ind_{G^\circ \subseteq G^\circ}^G=\gamma_{G^\circ}^G$.
\end{remark}
	
	
\begin{proposition}
\phantomsection
\label{indprop}
	\begin{enumerate}
		\item 
\label{it:adjunction}		
		The functor $\res_{L\subseteq P}^G$, resp.~$'\res_{L\subseteq P}^G$, is left, resp.~right, adjoint to $\ind_{L\subseteq P}^G$.
		\item 
\label{it:transitivity}	
		The formation of these functors is transitive, in the sense that if $P\subseteq Q\subseteq G$ are parabolic subgroups with respective Levi factors $L\subseteq M$ then we have
\[	
	\ind_{M\subseteq Q}^G\circ \ind_{L\subseteq M\cap P}^M=\ind_{L\subseteq P}^G, \ \res_{L\subseteq M\cap P}^M \circ \res_{M\subseteq Q}^G = \res_{L\subseteq P}^G, \ \pres_{L\subseteq M\cap P}^M \circ \pres_{M\subseteq Q}^G = \pres_{L\subseteq P}^G.
\]
		\item 
\label{it:exactness}		
		The functors $\ind_{L\subseteq P}^G$, $\res_{L\subseteq P}^G$ and $\pres_{L\subseteq P}^G$ are $t$-exact with respect to the perverse $t$-structures.
		\end{enumerate}
\end{proposition}

\begin{proof}
The proofs are similar to those in the connected case (see~\cite[\S 2.1]{Achar2016a}) or for characteristic-$0$ coefficients (see~\cite{Dillery2024}). Namely,~\eqref{it:adjunction} follows from the facts that the map $\nn_P \to \nn_G$ is a closed immersion, while the map $\nn_P \to \nn_L$ is smooth. \eqref{it:transitivity} follows from an application of the base change theorem. Finally, for~\eqref{it:exactness}, as in~\cite[Corollary~2.4.4]{Dillery2024} we have
\begin{equation}
\label{eqn:Res-For}
\mathrm{For}^L_{L^\circ} \circ \res_{L\subseteq P}^G \cong \res^{G^\circ}_{L^\circ \subseteq P^\circ} \circ \mathrm{For}^G_{G^\circ}.
\end{equation}
Therefore,
t-exactness of $\res_{L\subseteq P}^G$ follows from that of $\res^{G^\circ}_{L^\circ \subseteq P^\circ}$, for which we refer to~\cite[\S 2.1]{Achar2016a}.
One checks similarly that 
$\pres_{L\subseteq P}^G$ is t-exact. Then $\ind_{L\subseteq P}^G$ is t-exact because it admits t-exact left and right adjoints.
\end{proof}
	
\begin{remark}
When $\mathrm{char}(\bk)=0$ the functors $\ind_{L\subseteq P}^G$, $\res_{L\subseteq P}^G$ and $\pres_{L\subseteq P}^G$ preserves semisimple complexes. (This follows from the Decomposition Theorem, see~\cite[Lemma~2.4.6]{Dillery2024}.) In general this property is not true, already for connected groups.
For instance, when $G=\mathrm{SL}_2(\mathbb{C})$ with $B$ the Borel subgroup of upper triangular matrices and $T$ the maximal torus of diagonal matrices, then for the skyscraper sheaf $\delta_0$ at the origin of $\nn_T=\{0\}$ the Springer sheaf $\ind_{T\subseteq B}^G (\delta_0)$ is not semisimple if $\mathrm{char}(\bk)=2$, see e.g.~\cite[\S 2.4]{Juteau2012}.
\end{remark}

\subsection{Cuspidal local systems and cuspidal triples}
\label{ss:cuspidal}

	
\begin{definition}
\label{def:cuspidal}
A perverse sheaf $\f \in \Perv_G(\nn_G,\bk)$ is called \emph{cuspidal} if $\res_{L\subseteq P}^G(\f)=0$ for every non-maximal special parabolic subgroup $P$ with Levi factor $L$.
		We call a pair $(\cO, \cE)$ of a $G$-orbit $\cO \subset \nn_G$ and a simple $G$-equivariant $\bk$-local system $\cE$ on $\cO$ a \emph{cuspidal pair} if the associated simple equivariant perverse sheaf $\IC(\cO,\cE)$ is cuspidal.
	\end{definition}
	
We will denote by
\[
\fR_{G,\bk}^{\cusp} \subset \fR_{G,\bk}
\]
the subset of (isomorphism classes of) cuspidal objects.

If $\f \in \fR_{G,\bk}$, using~\eqref{eqn:Res-For} and the fact that the functors $\mathrm{For}^G_{G^\circ}$ do not kill any nonzero object one sees that $\f$ is cuspidal if and only if each composition factor of $\mathrm{For}^G_{G^\circ}(\f)$ is cuspidal in the sense above for the group $G^\circ$, or in other words in the sense of~\cite{Achar2016a}. (In fact, in view of Lemma~\ref{lem:ind-res-ss}\eqref{it:rest-ss}, one can replace ``composition factor'' in this sentence by ``simple direct summand.'')
Using a similar property for the functors $\pres_{L\subseteq P}^G$, and~\cite[Proposition~2.1]{Achar2016a}, one sees that $\f$ is cuspidal if and only if it is annihilated by $\pres_{L\subseteq P}^G$ for any non-maximal special parabolic subgroup $P$ with Levi factor $L$. Then, arguing as in~\cite[Proposition~2.1]{Achar2016a}, one sees that this condition is equivalent to the following ones:
\begin{itemize}
\item
$\f$ is not isomorphic to a quotient of an object of the form $\ind_{L \subseteq P}^G(\cG)$ with $P$ a non-maximal special parabolic subgroup $P$ with Levi factor $L$ and $\cG \in \Perv_L(\nn_L,\bk)$;
\item
$\f$ is not isomorphic to a subobject of an object of the form $\ind_{L \subseteq P}^G(\cG)$ with $P$ a non-maximal special parabolic subgroup $P$ with Levi factor $L$ and $\cG \in \Perv_L(\nn_L,\bk)$.
\end{itemize}

\begin{remark}
\phantomsection
\label{rmk:cuspidal}
\begin{enumerate}
\item
\label{it:comparsion-cuspidal-AMS}
In case $\mathrm{char}(\bk)=0$, our definition corresponds to the one given in~\cite[\S 2.6]{Dillery2024}, for the admissible class $\mathcal{P}$ consisting of special parabolic subgroups. In case $G$ is connected, this definition coincides with that given in~\cite{Achar2016a}. See~\cite[\S 2.2]{Achar2016a} for comments on the comparison with the definition used by Lusztig (when $\mathrm{char}(\bk)=0$ and $G$ is connected) in~\cite{Lusztig1984}. 

Let us now compare our definition with that chosen in~\cite{Aubert2018}.
Let $(\cO,\cE)$ be a pair in $\fR_{G,\bk}$, and consider the decomposition $\cO = \cO_1 \sqcup \cdots \sqcup \cO_r$ of $\cO$ into $G^\circ$-orbits. Recall the proof of Lemma~\ref{lem:ind-res-ss}\eqref{it:rest-ss}, and in particular the decomposition~\eqref{eqn:For-IC}. The factors in this decomposition are conjugate to each other under automorphisms of $\nn_G$ induced by automorphisms of $G^\circ$, hence one is cuspidal if and only if all of them are cuspidal. Choosing $x \in \cO$, and denoting by $i$ the index such that $x \in \cO_i$, from this proof one sees that $(\cO,\cE)$ is cuspidal if and only if all the pairs $(\cO_i,\cE')$ where $\cE'$ runs over the composition factors of the (semisimple) $G^\circ$-equivariant local system $\cE_{|\cO_i}$ are cuspidal. 
In particular, in the setting of~\cite{Aubert2018} (corresponding for us to the case $\mathrm{char}(\bk)=0$), Definition~\ref{def:cuspidal} is equivalent to that given in~\cite[\S 3]{Aubert2018}, up to the identification of the unipotent variety and the nilpotent cone, and of representations of component groups of stabilizers with equivariant local systems on orbits.
\item
It is asserted in~\cite[\S 3]{Aubert2018}, in the setting where $\mathrm{char}(\bk)=0$, that if $(\cO,\cE)$ belongs to $\fR_{G,\bk}^{\cusp}$ then $\cO$ is a single $G^\circ$-orbit. This fact is however false. In fact, consider a connected reductive algebraic group $H$ such that there exist two pairs $(\cO_1,\cE_1)$ and $(\cO_2,\cE_2)$ in $\fR_{H,\bk}^{\cusp}$ with $\cO_1 \neq \cO_2$. (This situation can occur in particular for Spin groups.) Then, setting $G = (H \times H) \rtimes (\bZ/2\bZ)$ with $\bZ/2\bZ$ permuting the two factors, $\gamma_{G^\circ}^G(\IC(\cO_1,\cE_1) \boxtimes \IC(\cO_2,\cO_2))$ is a simple $G$-equivariant perverse sheaf which, as $G^\circ$-equivariant perverse sheaf, is isomorphic to $\IC(\cO_1,\cE_1) \boxtimes \IC(\cO_2,\cE_2) \oplus \IC(\cO_2,\cE_2) \boxtimes \IC(\cO_1,\cE_1)$. In particular it is cuspidal, but the corresponding $G$-orbit, namely $(\cO_1 \times \cO_2) \sqcup (\cO_2 \times \cO_1)$, is not a single $G^\circ$-orbit.
\end{enumerate}
\end{remark}
	
\begin{definition}
A \emph{cuspidal triple} for $G$ is a triple $(L,\cO,\cE)$ where $L$ is a special Levi subgroup of $G$, $\cO \subset \nn_{L}$ is an $L$-orbit, and $\cE$ is a simple $L$-equivariant $\bk$-local system on $\cO$ such that $(\cO,\cE)$ is a cuspidal pair for $L$.
\end{definition}

We introduce an equivalence relation on the set of cuspidal triples as follows. If $(L,\cO,\cE)$ and $(L',\cO',\cE')$ are cuspidal triples, we write $(L,\cO,\cE) \sim (L',\cO',\cE')$ if there exists $g \in G$ such that $L'=gLg^{-1}$, $\cO' = g \cdot \cO$, and $\cE'$ is isomorphic to the image of $\cE$ under the pushforward functor associated with the isomorphism $\cO \simto \cO'$ induced by $g$. We will denote by $\fL$ the quotient of the set of cuspidal triples by this equivalence relation.
	

\subsection{Fourier transform and applications}
\label{ss:Fourier}

The next tool we will need in our study is Fourier transform. This subsection is parallel to~\cite[\S 2.4]{Achar2016a} or~\cite[\S 3.4]{Dillery2024}.

Here we work as in~\cite[\S 2.4]{Achar2016a} with Fourier--Sato transform. (One can also work with the Fourier--Laumon transform as in~\cite[\S 3.4]{Dillery2024}, since we will only use some formal properties that the two constructions enjoy.) First, we note that there exists a $G$-invariant non-degenerate symmetric bilinear form on $\mathfrak{g}$. In fact,
we have a $G$-equivariant decomposition $\mathfrak{g} = [\mathfrak{g},\mathfrak{g}] \oplus \mathfrak{z}(\mathfrak{g})$ where $\mathfrak{z}(\mathfrak{g})$ is the center of $\mathfrak{g}$.
If we choose $G$-invariant non-degenerate symmetric bilinear forms on $[\mathfrak{g},\mathfrak{g}]$ and $\mathfrak{z}(\mathfrak{g})$, we can assemble them to obtain the desired form on $\mathfrak{g}$. Here, on $[\mathfrak{g},\mathfrak{g}]$ one can e.g.~take the Killing form. To show existence of a $G$-invariant nondegenerate symmetric bilinear form on $\mathfrak{z}(\mathfrak{g})$ one can proceed as follows. This Lie algebra is the Lie algebra of the torus $\rZ(G^\circ)^\circ$. Hence we have $\mathfrak{z}(\mathfrak{g}) = \mathbb{C} \otimes_{\mathbb{Z}} \mathrm{X}_*(\rZ(G^\circ)^\circ)$. The action of $G$, which factors through an action of the finite group $G/G^\circ$, is induced by an action on the lattice $\mathrm{X}_*(\rZ(G^\circ)^\circ)$; it therefore admits a real form, hence an invariant non-degenerate bilinear form.

After fixing such a form, Fourier transform provides a t-exact autoequivalence $\mathbb{T}_G$ of the derived category of $G$-equivariant conical sheaves on $\mathfrak{g}$. (The meaning of ``conical'' will not be important here. All that we will use is that these objects form a full subcategory of the $G$-equivariant derived category of $\mathfrak{g}$, and that the pushforward to $\mathfrak{g}$ of any $G$-equivariant perverse sheaf on $\nn_G$ is conical.)

\begin{lemma}
\label{lem:Fourier-cusp}
For any $(\cO,\cE) \in \fR_{G,\bk}^{\cusp}$, there exists a pair $(\cO',\cE') \in \fR_{G,\bk}^{\cusp}$ such that
\[
\mathbb{T}_G(\IC(\cO,\cE)) \cong \IC(\cO' + \mathfrak{z}(\mathfrak{g}), \cE' \boxtimes \underline{\bk}_{\mathfrak{z}(\mathfrak{g})}).
\]
\end{lemma}

\begin{proof}
The proof is similar to that in the connected case, see~\cite[Proposition~2.11 and Corollary~2.12]{Achar2016a}. Namely, we first assume that $G^\circ$ is semisimple. In this case the claim amounts to saying that the simple perverse sheaf $\mathbb{T}_G(\IC(\cO,\cE))$ is supported on $\nn_G$, and cuspidal. As explained in~\S\ref{ss:cuspidal} these properties can be checked after applying the forgetful functor $\mathrm{For}^G_{G^\circ}$. Since $\mathrm{For}^G_{G^\circ}(\IC(\cO,\cE))$ is cuspidal and $\mathrm{For}^G_{G^\circ}(\mathbb{T}_G(\IC(\cO,\cE))) \cong \mathbb{T}_{G^\circ}(\mathrm{For}^G_{G^\circ}(\IC(\cO,\cE)))$, they follow from the connected case treated in~\cite[Proposition~2.11]{Achar2016a}.

The general case follows by comparing with the case of $G/\rZ(G^\circ)^\circ$.
\end{proof}

\begin{remark}
As in~\cite[Remark~2.13]{Achar2016a} or~\cite[Conjecture~3.4.2]{Dillery2024} it seems reasonable to expect that in the setting of Lemma~\ref{lem:Fourier-cusp} we always have $(\cO',\cE')=(\cO,\cE)$. However this fact is already not known if $G$ is connected and $\mathrm{char}(\bk)>0$, or when $G$ is disconnected and $\mathrm{char}(\bk)=0$.
\end{remark}

Note that for any Levi subgroup $L$ of $G$, the restriction of our non-degenerate $G$-invariant bilinear form on $\mathfrak{g}$ to $\mathfrak{l}$ is a non-degenerate $L$-invariant bilinear form on $\mathfrak{l}$, so that it makes sense to consider the Fourier transform $\mathbb{T}_L$.

We now fix a special Levi subgroup $L$ of $G$, an $L$-orbit $\cO \subseteq \nn_L$, and an $L$-equivariant local system $\cE$ on $\cO$. We set
\[
\mathfrak{z}(\mathfrak{l})^\circ := \{ x \in \mathfrak{z}(\mathfrak{l}) \mid \rZ_G(x)^\circ = L^\circ\}.
\]
(Note that here we have $\rZ_G(x)^\circ = \rZ_{G^\circ}(x)^\circ$, so that this definition is in a sense insensitive to disconnectedness.) Let also
\[
\widetilde{Y}_{(L,\cO)} := G \times^L (\cO + \mathfrak{z}(\mathfrak{l})^\circ), \quad Y_{(L,\cO)} := G \cdot (\cO + \mathfrak{z}(\mathfrak{l})^\circ),
\]
and consider the stabilizer $\rN_G(L,\cO)$ of $\cO$ in $\rN_G(L)$.
We have a natural map
\[
\pi_{(L,\cO)}: \widetilde{Y}_{(L,\cO)}\to Y_{(L,\cO)}
\]
induced by the $G$-action on $\mathfrak{g}$.
	
\begin{proposition}
\label{prop:Galois-covering}
	The morphism $\pi_{(L,\cO)}$ is a Galois covering with Galois group $\rN_G(L,\cO)/L$.
\end{proposition}

\begin{proof}
The statement will follow from~\cite[Lemma 3.3.2]{Dillery2024}, provided we prove that the image of $L$ in $\rN_G(L^\circ,\cO) / \rN_G(L^\circ,\cO)^\circ$ is normal in our setting. (Here, $\rN_G(L^\circ,\cO)$ is the stabilizer of $\cO$ in $\rN_G(L^\circ)$.) 
However, since $L$ is special we have $\rN_G(L)=\rN_G(L^{\circ})$, therefore $\rN_G(L,\cO)=\rN_G(L^{\circ},\cO)$, and thus $L$ is normal in $\rN_G(L^{\circ},\cO)$.
\end{proof}

Since $\widetilde{Y}_{(L,\cO)}$ is defined using induction of varieties, there exists an equivalence of categories between the categories of $G$-equivariant local systems on $\widetilde{Y}_{(L,\cO)}$ and of $L$-equivariant local systems on $\cO + \mathfrak{z}(\mathfrak{l})^\circ$. Recall that we have fixed an $L$-equivariant local system $\cE$ on $\cO$; we will denote by $\widetilde{\cE}$ the $G$-equivariant local system on $\widetilde{Y}_{(L,\cO)}$ corresponding under this equivalence to $\cE \boxtimes \underline{\bk}_{\mathfrak{z}(\mathfrak{l})^\circ}$. By Proposition~\ref{prop:Galois-covering}, using pushforward we obtain the $G$-equivariant local system $(\pi_{(L,\cO)})_* \widetilde{\cE}$ on $Y_{(L,\cO)}$.

Let us now fix a parabolic subgroup $P \subset G$ having $L$ as Levi factor.
As in~\cite[\S 2.4]{Achar2016a} or~\cite[\S 2.4]{Dillery2024}, the induction functor $\ind_{L \subseteq P}^G$ ``extends'' to a functor from $\Db_L(\mathfrak{l},\bk)$ to $\Db_G(\mathfrak{g},\bk)$, which will be denoted similarly, and which sends conical complexes to conical complexes. (For this construction, one simply replaces the nilpotent cones by the full Lie algebras in the correspondence~\eqref{eqn:correspondence-ind-res}.)

\begin{proposition}
\label{prop:induction-IC}
We have a canonical isomorphism
\[
\ind_{L \subseteq P}^G(\IC(\cO + \mathfrak{z}(\mathfrak{l}), \cE \boxtimes \underline{\bk}_{\mathfrak{z}(\mathfrak{l})})) \cong \IC(Y_{(L,\cO)}, (\pi_{(L,\cO)})_* \widetilde{\cE}).
\]
\end{proposition}

\begin{proof}
As in the proof of~\cite[Proposition~2.17]{Achar2016a}, what we have to prove is that the left-hand side satisfies the usual defining conditions for the right-hand side. For this we use the ``more concrete'' description of induction as in~\cite[\S 2.5]{Achar2016a}. From this description, it is clear that this complex is supported on the closure of $Y_{(L,\cO)}$, and one easily checks that its restriction to $Y_{(L,\cO)}$ is $(\pi_{(L,\cO)})_* \widetilde{\cE}$. Next one needs to check that the restriction, resp.~corestriction, of $\ind_{L \subseteq P}^G(\IC(\cO + \mathfrak{z}(\mathfrak{l}), \cE \boxtimes \underline{\bk}_{\mathfrak{z}(\mathfrak{l})}))$ to $\overline{Y_{(L,\cO)}} \smallsetminus Y_{(L,\cO)}$ is concentrated in perverse degrees $\leq -1$, resp.~$\geq 1$. These properties can be checked after applying the functor $\mathrm{For}^G_{G^\circ}$. Now if $\Sigma \subset G$ is a subset whose image in $G/G^\circ$ form cosets representatives for $(G/G^\circ)/(P/P^\circ)$ (see Lemma~\ref{lem:conn-comp-parabolic}) one sees that we have
\[
\mathrm{For}^G_{G^\circ} \bigl( \ind_{L \subseteq P}^G(\IC(\cO + \mathfrak{z}(\mathfrak{l}), \cE \boxtimes \underline{\bk}_{\mathfrak{z}(\mathfrak{l})})) \bigr) \cong \bigoplus_{\sigma \in \Sigma} (\mathrm{Ad}_\sigma)_* \ind_{L^\circ \subseteq P^\circ}^{G^\circ}(\IC(\cO + \mathfrak{z}(\mathfrak{l}), \cE \boxtimes \underline{\bk}_{\mathfrak{z}(\mathfrak{l})})).
\]
In the right-hand side one can write $\cO$ as a union of $G^\circ$-orbits to further decompose this object as a direct sum. Then the direct summands are exactly of the type appearing in~\cite[Proposition~2.17]{Achar2016a}, and one can use this result to conclude.
\end{proof}

\begin{remark}
In the proof of Proposition~\ref{prop:induction-IC}, one can replace the argument using coset representatives for $(G/G^\circ)/(P/P^\circ)$ by a use of the Mackey formula proved in Theorem~\ref{mackey} below.
\end{remark}

One can then derive from these results the following important consequence.
	
\begin{corollary}
\label{cor:subquo}
For a cuspidal triple $(L,\cO,\cE)$, up to isomorphism the perverse sheaf $\ind_{L\subseteq P}^G(\IC(\cO,\cE))$ does not depend on the choice of a parabolic subgroup $P$ of $G$ with Levi factor $L$, and is invariant under the equivalence relation introduced in~\S\ref{ss:cuspidal}.
Moreover, the sets of isomorphism classes of simple subobjects and of simple quotients of $\ind_{L\subseteq P}^G(\IC(\cO,\cE))$ coincide.
\end{corollary}

\begin{proof}
It is enough to prove similar properties for the perverse sheaf $\mathbb{T}_G(\ind_{L\subseteq P}^G(\IC(\cO,\cE)))$. However, as in~\cite[Lemma~2.9]{Achar2016a} induction commutes with Fourier transform in the appropriate sense, so that we have
\[
\mathbb{T}_G(\ind_{L\subseteq P}^G(\IC(\cO,\cE))) \cong \ind_{L\subseteq P}^G(\mathbb{T}_L(\IC(\cO,\cE))).
\]
Now, by Lemma~\ref{lem:Fourier-cusp} we have 
\[
\mathbb{T}_L(\IC(\cO,\cE)) \cong \IC(\cO' + \mathfrak{z}(\mathfrak{l}), \cE' \boxtimes \underline{\bk}_{\mathfrak{z}(\mathfrak{l})})
\]
for some pair $(\cO',\cE') \in \fR_{L,\bk}$. Then we use Proposition~\ref{prop:induction-IC} to deduce an isomorphism
\[
\mathbb{T}_G(\ind_{L\subseteq P}^G(\IC(\cO,\cE))) \cong \IC(Y_{(L,\cO')}, (\pi_{(L,\cO')})_* \widetilde{\cE'}).
\]
The right-hand side clearly does not depend on the choice of $P$, and is invariant under our equivalence relation, which proves the first claim.

The proof of the second claim is identical to that of~\cite[Lemma 2.3]{Achar2017}. We recall its main ingredients (presented from a slightly different angle, which follows~\cite[Remark~3.3]{Achar2017}) for later reference. Once again it is enough to prove the similar claim for $\mathbb{T}_G(\ind_{L\subseteq P}^G(\IC(\cO,\cE)))$. Next, since the $\IC$ functor respects socles and heads, it is enough to prove that the isomorphism classes of simple subobjects and quotients of $(\pi_{(L,\cO')})_* \widetilde{\cE'}$ coincide. For this we fix $x \in \cO'$, and interpret local systems in terms of representations of finite groups. The local system $\cE'$ corresponds to a representation $V$ of $\rA_L(x)$. Now we have
\[
\rA_{\rN_G(L)}(x) / \rA_L(x) \cong \rZ_{\rN_G(L)}(x) / \rZ_L(x) \cong \rN_G(L,\cO')/L,
\]
and this group is the Galois group of the covering $\pi_{(L,\cO')}$ by Proposition~\ref{prop:Galois-covering}. Hence any representation of $\rA_{\rN_G(L)}(x)$ determines a $G$-equivariant local system on $Y_{(L,\cO')}$, and $(\pi_{(L,\cO')})_* \widetilde{\cE'}$ corresponds to the representation $\mathrm{Ind}_{\rA_L(x)}^{\rA_{\rN_G(L)}(x)}(V)$. Hence the claim follows from Corollary~\ref{cor:subobjs-quotients}.
\end{proof}	

\subsection{Induction series}

Thanks to Corollary~\ref{cor:subquo}, it makes sense to define, for a cuspidal triple $(L,\cO,\cE)$, the subset
\[
\fR_{G,\bk}^{(L,\cO,\cE)} \subseteq \fR_{G,\bk}
\]
as the set of isomorphism classes of simple subobjects (equivalently, quotients) of the $G$-equivariant perverse sheaf $\ind_{L\subseteq P}^G(\IC(\cO,\cE))$, for any choice of a parabolic subgroup $P \subset G$ with Levi factor $L$. Moreover, this subset only depends on the image of $(L,\cO,\cE)$ in $\fL$. It will be called the \emph{induction series} associated with (the class of) $(L,\cO,\cE)$.
	
%

	\section{Partition into induction series}
	\label{sec:partition}

The goal of this section is to prove that induction series form a partition of $\fR_{G,\bk}$, see Theorem~\ref{disj}.

\subsection{A Mackey formula}

We first prove a ``Mackey formula'' for compositions of restriction and induction functors, for which the geometric input is given in~\cite{Dillery2024}.
	
For a subgroup $A\subseteq G$ and an element $g\in G$, we denote by ${}^gA$ the subgroup $gAg^{-1}$. We fix parabolic subgroups $P,Q$ of $G$ with respective Levi subgroups $L,M$. By~\cite[Lemma~2.3.1]{Dillery2024}, there exists a sequence $g_1, \dots, g_s$ of representatives for the double cosets in $Q \backslash G / P$ such that:
\begin{itemize}
\item
for any $i$, $M \cap {{}^{g_i} L}$ contains a maximal torus of $G$;
\item
if $Q g_i P \subseteq \overline{Q g_j P}$, then $i \leq j$.
\end{itemize}
Then, for any $i$, $M \cap {}^{g_i} P$ is a parabolic subgroup of $M$ with Levi subgroup $M \cap {}^{g_i} L$, and $Q \cap {}^{g_i} L$ is a parabolic subgroup in ${}^{g_i} L$ with Levi subgroup $M \cap {}^{g_i} L$.

	
\begin{theorem}[Mackey formula]
\label{mackey}
		 Let $\f\in \Perv_L(\nn_L,\bk).$ Then in $\Perv_M(\nn_M,\bk)$ we have a filtration
\[
0=\f_0\subseteq \f_1\subseteq \cdots \subseteq \f_s=\res_{M\subseteq Q}^G\circ \ind_{L\subseteq P}^G(\f)
\]
such that for any $i \geq 1$ we have
\[
\f_i/\f_{i-1}\cong \ind_{M\cap {{}^{g_i}L} \subseteq M\cap {}^{g_i}P}^M\left(\res_{M\cap {}^{g_i}L\subseteq Q\cap {}^{g_i}L}^{^{g_i}L} \mathrm{Ad}(g_i^{-1})^*\f)\right).
\]
\end{theorem}
	
\begin{proof}
	The proof is similar to that of~\cite[Theorem 2.5.1]{Dillery2024}. 
Set $\fX =  P \backslash \cN_P \times_{G \backslash \cN_G} Q \backslash \cN_Q$, so that the upper part of the following diagram (in which all other maps are as in~\eqref{eqn:correspondence-ind-res}) is cartesian:
		\[
		\begin{tikzcd}[column sep=5em]
			& & \fX \arrow[ld] \arrow[rd]& & \\
			& P \backslash \cN_P \arrow[ld] \arrow[rd]& & Q \backslash \cN_Q \arrow[ld] \arrow[rd] & \\
			L \backslash \cN_L & & G \backslash \cN_G & & M \backslash \cN_M.
		\end{tikzcd}
		\]
Denote by $\pi : \fX \to L \backslash \cN_L$ and $\rho : \fX \to M \backslash \cN_M$ the composition of maps from this diagram.

There is a natural map from $\fX$ to $P \backslash \mathrm{pt} \times_{G \backslash \mathrm{pt}} Q \backslash \mathrm{pt} \cong Q\backslash G/ P.$ 		
From the (finite) stratification of $Q\backslash G/ P$ by the double cosets $(Q\backslash G /P)_i:=Q\backslash Qg_iP/ P$, we deduce a stratification of $\fX$ by some substacks $\fX_i$. 

Fix $i$, and denote by $\pi_i, \rho_i$ the restrictions of $\pi, \rho$ to $\fX_i$.
Let also $\cN_i:=\cN_{Q\cap \leftindex^{g_i}{L}}\times_{\cN_{M\cap \leftindex^{g_i}{L}}} \cN_{M \cap \leftindex^{g_i}{P}},$
		and $G_i:=(Q\cap \leftindex^{g_i}{L})\times_{M\cap \leftindex^{g_i}{L}} (M \cap \leftindex^{g_i}{P})$, and consider
the commutative diagram
		\[
		\begin{tikzcd}[column sep=5em]
			L \backslash \cN_L & \fX_i \arrow[l,"\pi_i"]\arrow[d,"f_i"] \arrow[rd,"\rho_i"] & \\
			{}^{g_i}L \backslash \cN_{{}^{g_i}L} \arrow[u,"\mathrm{Ad}(g_i^{-1})"]& G_i \backslash \cN_i \arrow[l]\arrow[r]& M \backslash \cN_M
		\end{tikzcd}
		\]
		of~\cite[Theorem 2.5.3]{Dillery2024}.
		Here, by definition the correspondence in the lower line is defining $\ind_{M\cap {}^{g_i}L\subseteq M\cap {}^{g_i}P}^M \circ \res_{M\cap {}^{g_i}L\subseteq Q\cap {}^{g_i}L}^{^{g_i}L}$.
		Since $f_i$ satisfies the assumptions of \cite[Lemma 2.2.3]{Dillery2024}, the functor $f_{i!}f_i^*$ is isomorphic to the identity functor. Therefore, we have
		\begin{equation}\label{indres}
			(\rho_{i})_! (\pi_{i})^*\f=\ind_{M\cap {}^{g_i}L\subseteq M\cap {}^{g_i}P}^M\left(\res_{M\cap {}^{g_i}L\subseteq Q\cap {}^{g_i}L}^{^{g_i}L} \mathrm{Ad}(g_i^{-1})^*\f)\right).
		\end{equation}
In particular, $(\rho_{i})_! (\pi_{i})^*\f$ is perverse by Proposition \ref{indprop}\eqref{it:exactness}.
		
For any $i$, we now set $(Q\backslash G /P)_{\leq i}:=Q\backslash \bigcup_{j\leq i} Qg_iP/ P$.
 Due to the properties of the ordering chosen on the $g_i$ above we have that, for any $i$, $(Q\backslash G /P)_{i}$ is an open substack of $(Q\backslash G /P)_{\leq i}$ with closed complement $(Q\backslash G /P)_{\leq i-1}$. Pulling back these substacks along the map $\fX\rightarrow Q\backslash G/P$ we get substacks $\fX_{\leq i}, \fX_{\leq i-1}$.
The stack $\fX_{i}$ is then also an open substack of $\fX_{\leq i}$ with closed complement $\fX_{\leq i-1}$, which gives rise to a distinguished triangle
\[
(\rho_{\leq i-1})_! (\pi_{\leq i-1})^*\f \rightarrow (\rho_{\leq i})_! (\pi_{\leq i})^*\f \rightarrow \rho_{ i!}\pi_{ i}^*\f \xrightarrow{+1}.
\]
	
We set $\f_i:=\rho_{\leq i!}\pi_{\leq i}^*\f$. Since $\rho_{i!}\pi_{i}^*\f$ is perverse for each $i$ by~\eqref{indres}, using induction one sees that each $\f_i$ is also perverse, and that for any $i \geq 1$ the map $\f_{i-1} \to \f_i$ is injective. The triangles above thus become short exact sequences of perverse sheaves, and using~\eqref{indres} the corresponding filtration of $\f_s=\res_{M\subseteq Q}^G\circ \ind_{L\subseteq P}^G(\f)$ has the desired property.
	\end{proof}
	
\begin{remark}
Note that above we did not assume that $P$ and $Q$ were special. However this will be the case for most applications of Theorem~\ref{mackey}. If this is the case, then also each $M\cap {}^{g_i}L$ is special; this is the content of the assertion that special parabolic subgroups form an admissible class in the sense considered in Remark~\ref{rmk:parabolics}. For a justification, see the comments following~\cite[Definition~2.6.7]{Dillery2024}.
\end{remark}

\subsection{Decomposition into induction series}

We can now prove the first main result of this paper, which says that $\fR_{G,\bk}$ is partioned into induction series.
	
\begin{theorem}
\label{disj}
We have a partition
\[
\fR_{G,\bk}=\bigsqcup_{(L,\cO,\cE)\in \fL} \fR_{G,\bk}^{(L,\cO,\cE)}.
\]
\end{theorem}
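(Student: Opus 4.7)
The plan is to prove the theorem in two stages: first, that every simple object of $\Perv_G(\nn_G, \bk)$ lies in at least one induction series, and second, that induction series attached to non-conjugate cuspidal triples are disjoint. Existence I would prove by induction on $\dim G$. Given a simple $\f$, if $\f = IC(\cO, \cE)$ is already cuspidal then $(G, \cO, \cE)$ is a cuspidal datum and $\f$ lies in its own induction series. Otherwise, by the definition of cuspidality, there is a proper special parabolic $P$ (meaning $P^\circ \subsetneq G^\circ$) with special Levi $L$ such that $\res_{L \subseteq P}^G \f \neq 0$; by $t$-exactness of $\res$ (Proposition~\ref{indprop}(3)) this restriction is perverse, and I would choose a simple quotient $\cG$. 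Adjunction (Proposition~\ref{indprop}(1)) converts the surjection onto $\cG$ into an injection $\f \hookrightarrow \ind_{L\subseteq P}^G \cG$. The induction hypothesis, applied to the simple $\cG$ on the smaller group $L$, places $\cG$ in some series $\Re_{L,\bk}^{(M, \cO_M, \cE_M)}$; exactness of $\ind_{L\subseteq P}^G$ and the transitivity isomorphism of Proposition~\ref{indprop}(2) then yield $\f \hookrightarrow \ind_{M \subseteq Q}^G IC(\cO_M, \cE_M)$ for an appropriate parabolic $Q$ of $G$ with Levi $M$. Since $M$ is special in $L$, one checks that it is also special in $G$ (as in \S 2.1), so this is a genuine cuspidal datum for $G$ and $\f \in \Re_{G,\bk}^{(M, \cO_M, \cE_M)}$.

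For disjointness, suppose $\f$ lies in two induction series attached to triples $(L_i, \cO_i, \cE_i)$ and parabolics $P_i$, so that $\f \hookrightarrow \ind_{L_i \subseteq P_i}^G IC_i$ for $i = 1, 2$. Adjunction applied to the first embedding gives a surjection $\res_{L_1 \subseteq P_1}^G \f \twoheadrightarrow IC_1$; applying $\res_{L_1 \subseteq P_1}^G$ to the second embedding and using $t$-exactness shows $\res_{L_1 \subseteq P_1}^G \f \hookrightarrow \res_{L_1 \subseteq P_1}^G \ind_{L_2 \subseteq P_2}^G IC_2$, so $IC_1$ appears as a subquotient of the sheaf on the right. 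I would then apply the Mackey formula (Theorem~\ref{mackey}) to this composition, giving a filtration indexed by double cosets in $P_1 \backslash G / P_2$ whose associated graded pieces have the shape $\ind_{L_1 \cap xL_2 x^{-1}}^{L_1} \circ \res_{L_1 \cap xL_2 x^{-1}}^{xL_2 x^{-1}} (\mathrm{ad}_x)_* IC_2$. Cuspidality of $IC_2$ (pushed forward to $xL_2 x^{-1}$) forces the inner $\res$ to vanish unless $xL_2 x^{-1} \subseteq L_1$, and in the surviving terms the piece reduces to $\ind_{xL_2 x^{-1}}^{L_1} (\mathrm{ad}_x)_* IC_2$. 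If $xL_2 x^{-1} \subsetneq L_1$ at the level of identity components, this is an induction from a proper special Levi of $L_1$, and a second appeal to cuspidality---this time of $IC_1$ inside $L_1$, via adjunction---shows $IC_1$ cannot appear as a subquotient. Hence $xL_2 x^{-1} = L_1$ for some surviving $x$, and reading off the corresponding term yields $(\mathrm{ad}_x)_*(\cO_2, \cE_2) \cong (\cO_1, \cE_1)$, so the triples are $G$-conjugate.

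The hardest step will be making the Mackey filtration argument rigorous in the disconnected setting, where cuspidality is defined via $P^\circ \subsetneq G^\circ$ and where intersections such as $L_1 \cap xL_2 x^{-1}$ are not automatically special in $L_1$. The key technical point is that because special Levis are determined by their identity components, the vanishing of the inner $\res$ can be promoted from a statement about identity components to an equality of subgroups, which is what drives the reduction to $xL_2 x^{-1} = L_1$. Once this is in hand, the rest of the argument is formal manipulation of the adjunctions and transitivity isomorphisms provided by Proposition~\ref{indprop}, together with Lemma~\ref{subquo} to move freely between simple subobjects and simple quotients of induced cuspidals.
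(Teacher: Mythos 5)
Your proposal follows essentially the same strategy as the paper's proof: existence via locating a minimal special Levi on which the restriction is nonzero (your induction on $\dim G$ is the same idea packaged differently) and extracting a cuspidal quotient, then disjointness via adjunction combined with the Mackey filtration of Theorem~\ref{mackey} and cuspidality to force the Levis in two cuspidal data to be $G$-conjugate. The one small difference is that you spell out the second appeal to cuspidality (of $IC_1$ inside $L_1$, to rule out ${}^xL_2 \subsetneq L_1$), a step the paper's proof passes over more quickly; otherwise the arguments coincide.
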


	
	\begin{proof}
	The proof of is similar to that of the corresponding statement in case $G$ is connected given in~\cite{Achar2017a}.
	
		By a standard argument initially due to Bernstein in the setting of $p$-adic reductive groups, every simple perverse sheaf $\f\in \fR_{G,\bk}$ is part of an induction series. Indeed, consider a special parabolic subgroup with Levi subgroup $L$ such that $\res_{L\subseteq P}^G\f\neq 0$, and which is minimal for this property. Let also $\cG$ be a simple quotient of $\res_{L\subseteq P}^G\f$. Then, by Proposition~\ref{indprop}\eqref{it:transitivity} and the choice of $P$, $\res_{L\subseteq P}^G\f$ is annihilated by restriction to any Levi subgroup of a non-maximal parabolic subgroup of $L$, i.e.~is cuspidal. By exactness (see Proposition \ref{indprop}\eqref{it:exactness}), it follows that $\cG$ is cuspidal too. Setting $\cG=\IC(\cO,\cE)$, we therefore have a cuspidal triple $(L,\cO,\cE)$.
%
		By adjunction (see Proposition~\ref{indprop}\eqref{it:adjunction}), from the map $\res_{L\subseteq P}^G\f \twoheadrightarrow \cG$ we get a nonzero map $\f\maps \ind_{L\subseteq P}^G \cG$. Therefore, $\f \in \fR_{G,\bk}^{(L,\cO,\cE)}.$
		
For the disjointness of the induction series, assume $\cG \in \fR_{G,\bk}^{(L,\cO_L,\cE_L)}\cap \fR_{G,\bk}^{(M,\cO_M,\cE_M)}$. 
		Since induction series can be defined either in terms of simple submodules or simple quotients (see Corollary~\ref{cor:subquo}), $\cG$ is a simple quotient of $\ind_{L\subseteq P}^G(\IC(\cO_L,\cE_L))$ and also a simple submodule of $\ind_{M\subseteq Q}^G (\IC(\cO_M,\cE_M))$. The existence of the map $\ind_{L\subseteq P}^G(\IC(\cO_L,\cE_L))\twoheadrightarrow \cG \hookrightarrow \ind_{M\subseteq Q}^G (\IC(\cO_M,\cE_M))$ implies that
\[
\Hom_G(\ind_{L\subseteq P}^G(\IC(\cO_L,\cE_L)), \ind_{M\subseteq Q}^G (\IC(\cO_M,\cE_M)))\neq 0,
\]
and then by adjunction that
\[
\Hom_M(\res_{M\subseteq Q}^G\circ \ind_{L\subseteq P}^G(\IC(\cO_L,\cE_L)), \IC(\cO_M,\cE_M))\neq 0.
\]
		
Let $\f=\IC(\cO_L,\cE_L)$, and consider the notation of Theorem~\ref{mackey} for the present setting. Then there exists $i$ such that
\[
\Hom_M(\ind_{M\cap {}^{g_i}L\subseteq M\cap {}^{g_i}P}^M(\res_{M\cap {}^{g_i}L\subseteq Q\cap {}^{g_i}L}^{^{g_i}L} \mathrm{Ad}(g_i^{-1})^*\f)), \IC(\cO_M,\cE_M))\neq 0.
\]
By cuspidality of $\f$ (and thus of  $\mathrm{Ad}(g_i^{-1})^*\f$), this implies that the parabolic restriction is trivial, so ${}^{g_i}L \subset M$. We then have
\[
\Hom_M(\ind_{M\cap {}^{g_i}L\subseteq M\cap {}^{g_i}P}^M( \mathrm{Ad}(g_i^{-1})^*\f), \IC(\cO_M,\cE_M))\neq 0.
\]
By cuspidality of $\IC(\cO_M,\cE_M)$ this implies that ${}^{g_i} L = M$, and finally that $(L, \cO_L, \cE_L)$ and $(M,\cO_M,\cE_M)$ are equivalent.
	\end{proof}

	\section{Parametrization of induction series}
	\label{sec:parametrization}
	
Our next goal is to give, for a given cuspidal triple $(L,\cO,\cE)$ of $G$, a parametrization of $\fR_{G,\bk}^{(L,\cO,\cE)}$.
	
	\subsection{Weyl groups}
	\label{ss:Weyl-gps}

%
%
%
%

For any special Levi subgroup $L$ of $G$ and any $L$-orbit $\cO \subseteq \nn_L$, the group $\rN_G(L,\cO)$ acts on $\cO$, which induces an action of $\rN_G(L,\cO)/L$ on the set of isomorphism classes of simple $L$-equivariant local systems on $\cO$. This action can be described concretely in terms of representations of finite groups as follows. Recall that, after fixing $x \in \cO$, isomorphism classes of simple $L$-equivariant local systems on $\cO$ are in a canonical bijection with isomorphism classes of simple $\bk$-representations of $\rA_L(x)$. Now, as in the proof of Corollary~\ref{cor:subquo}, we have an identification $\rN_G(L)/L \cong \rA_{\rN_G(L)}(x) / \rA_L(x)$. Using the twist of representations as in~\S\ref{ss:simple-subobjs-ind-rep} for the normal subgroup $\rA_L(x)$ of $\rA_{\rN_G(L)}(x)$, we obtain an action of $\rA_{\rN_G(L)}(x) / \rA_L(x)$ on the set of isomorphism classes of simple $\bk$-representations of $\rA_L(x)$, which corresponds to the action considered above.

For a cuspidal triple $(L,\cO,\cE)$, we will denote by
\[
W_{(L,\cO,\cE)}
\]
the stabilizer of (the isomorphism class of) $\cE$ for the action of $\rN_G(L,\cO) / L$ on isomorphism classes of simple $L$-equivariant local systems on $\cO$.
This group is a subgroup of $\rN_G(L)/L$. When $G$ is connected this inclusion is in fact an equality, as follows from~\cite[Proposition~2.6 and Lemma~2.9]{Achar2017} and~\cite[Proposition 3.1]{Achar2017a}. This fact is not true in the disconnectedness case, as shown by the following example, which was communicated to us by Maarten Solleveld.

%
%
%
%
	
\begin{example}
\label{ex:weyl}
	Let $G=\mathrm{SL}_2(\bC)\rtimes \mathfrak{S}_3$ where elements of $\mathfrak{A}_3$ act trivially on $\mathrm{SL}_2(\bC)$ and elements of $\mathfrak{S}_3\setminus \mathfrak{A}_3$ act by $A\mapsto \leftindex^{t}{A}^{-1}$. Let also $\bk$ be a field which contains a nontrivial third root of unity. The special Levi subgroup $L \subset G$ whose identity component is the diagonal torus $T\subseteq \mathrm{SL}_2(\bC)$ is by definition $L=\rZ_G(T)=T\times \mathfrak{A}_3$, and we have $\rN_G(L)=\rN_{\mathrm{SL}_2(\bC)}(T)\rtimes \mathfrak{S}_3$, so that $\rN_G(L) / L \cong \mathfrak{S}_2 \times \mathfrak{S}_2$. (Here the left copy corresponds to $\rN_{\mathrm{SL}_2(\bC)}(T)/T$, while the right copy corresponds to $\mathfrak{S}_3/\mathfrak{A}_3$.)

Consider the nilpotent $L$-orbit $\cO=\{0\}$; here we have $\rA_L(0) = \mathfrak{A}_3$. Choose a non-trivial character $\chi: \mathfrak{A}_3\mapsto \bk^\times$, and let $\cE_{\chi}$ be the $L$-equivariant local system on $\cO$ corresponding to the associated $1$-dimensional representation. (Of course this local system is constant, but the equivariant structure is nontrivial.) Here we have $\rN_G(L,\cO) = \rN_G(L)$, but $W_{(L,\cO,\cE)} = \mathfrak{S}_2 \times \{1\}$.
	\end{example}
	
	\subsection{Parametrization}


Consider once again a cuspidal triple $(L,\cO,\cE)$. Applying Lem\-ma~\ref{lem:Fourier-cusp} to the pair $(\cO,\cE) \in \fR_{L,\bk}^\cusp$, we obtain an associated pair $(\cO',\cE') \in \fR_{L,\bk}^\cusp$.
	
\begin{theorem}
\phantomsection
\label{thm:parind}
%
\begin{enumerate}
\item
\label{it:parind-1}
There exists a canonical bijection between the set $\fR_{G,\bk}^{(L,\cO,\cE)}$ and the set of simple modules for the algebra $\en((\pi_{(L,\cO')})_* \widetilde{\cE'})$ of endomorphisms of the $L$-equivariant local system $(\pi_{(L,\cO')})_* \widetilde{\cE'}$.
\item
\label{it:parind-2}
The algebra $\en((\pi_{(L,\cO')})_* \widetilde{\cE'})$ is a crossed product of $W_{(L,\cO,\cE)}$ over the division algebra $\en(\cE)$; in case $\cE$ is absolutely irreducible, it is a twisted group algebra of $W_{(L,\cO,\cE)}$ over $\bk$.
\end{enumerate}
\end{theorem}

\begin{proof}
\eqref{it:parind-1}
As in the proof of Corollary~\ref{cor:subquo}, there exists a canonical bijection between $\fR_{G,\bk}^{(L,\cO,\cE)}$ and the set of simple subobjects of the $L$-equivariant local system $(\pi_{(L,\cO')})_* \widetilde{\cE'}$. Then we choose $x \in \cO'$, 	and apply Proposition~\ref{prop:bijection-simple-subreps}\eqref{it:bijection-simple-subreps}.

\eqref{it:parind-2}
Similarly the claim will follow from Proposition~\ref{prop:bijection-simple-subreps}\eqref{it:crossed-product} and Remark~\ref{rmk:twisted-gp-alg}, provided we show that $W_{(L,\cO,\cE)} = W_{(L,\cO',\cE')}$ and $\en(\cE) \cong \en(\cE')$. These isomorphisms are however clear by construction, since $\mathbb{T}_L$ is an equivalence of categories.
\end{proof}

\begin{remark}
\label{rmk:cocycle}
In the proof of Theorem~\ref{thm:parind} we have identified $\en((\pi_{(L,\cO')})_* \widetilde{\cE'})$ with the algebra $\en_{\rA_{\rN_G(L)}(x)_V}(\ind_{\rA_L(x)}^{\rA_{\rN_G(L)(x)_V}}(V))$ where $x \in \cO'$, $V$ is the representation of $\rA_L(x)$ corresponding to $\cE'$, and $\rA_{\rN_G(L)}(x)_V$ is its stabilizer in $\rA_{\rN_G(L)}(x)$. Assume that $V$ is absolutely irreducible, and extends to a representation of $\rA_{\rN_G(L)}(x)_V$. Then, after fixing such an extension, we obtain an identification $\ind_{\rA_L(x)}^{\rA_{\rN_G(L)(x)_V}}(V) \cong V \otimes \bk[W_{(L,\cO,\cE)}]$, and then a specific isomorphism $\en_{\rA_{\rN_G(L)}(x)_V}(\ind_{\rA_L(x)}^{\rA_{\rN_G(L)(x)_V}}(V)) \cong \bk[W_{(L,\cO,\cE)}]$. In case $G$ is connected, the results of~\cite[\S 3]{Achar2017}, translated in terms of representations of finite groups, say that (still assuming that $V$ is absolutely irreducible, i.e.~that $\cE$ is absolutely irreducible) there always exists a canonical extension of $V$ to a representation of $\rA_{\rN_G(L)}(x)_V$, hence a canonical isomorphism $\en_{\rA_{\rN_G(L)}(x)_V}(\ind_{\rA_L(x)}^{\rA_{\rN_G(L)(x)_V}}(V)) \cong \bk[W_{(L,\cO,\cE)}]$. (We moreover have $W_{(L,\cO,\cE)} = \rN_G(L)/L$ in this case, see the discussion preceding Example~\ref{ex:weyl}.)

This statement is not true in general in the disconnected case, since the algebra $\en_{\rA_{\rN_G(L)}(x)_V}(\ind_{\rA_L(x)}^{\rA_{\rN_G(L)(x)_V}}(V))$ might not be isomorphic to $\bk[W_{(L,\cO,\cE)}]$, see Example~\ref{nontrivial} below. We expect that this algebra can be described in canonical terms, but such a statement is still unknown, even in case $\mathrm{char}(\bk)=0$.
\end{remark}

	\subsection{An example of non-trivial cocycle}
	\label{ss:example-cocyle}

	

In this subsection we show trough an example that, in general, the algebra in Theorem~\ref{thm:parind} might \emph{not} be isomorphic to $\bk[W_{(L,\cO,\cE)}]$. This example is reproduced from~\cite[Example 3.2]{Aubert2018}, where it was discovered by considerations of the role of the generalized Springer correspondence in the Langlands correspondence. (This example is discussed in~\cite{Aubert2018} to illustrate a slightly different property, related to the discussion in Remark~\ref{rmk:parabolics}; the two properties are in fact identical in this case, essentially because the maximal special Levi subgroup of $G$ is $G^\circ$ for this group.)
	
	\begin{example}
	\label{nontrivial}
		Let $Q$ be the subgroup of $\mathrm{SL}_2(\bC)^5$ defined as:
		\begin{equation*}
		Q := \left\{ (\pm I_2) \times I_8, 
		 \begin{pmatrix} \pm i & 0 \\ 0 & \mp i \end{pmatrix}  \times I_4 \times -I_4, 
		\begin{pmatrix} 0 & \pm i \\ \pm i & 0 \end{pmatrix}  \times I_2 \times -I_2 \times I_2 \times -I_2,\begin{pmatrix} 0 & \mp 1 \\ \pm 1 & 0 \end{pmatrix}  \times I_2 \times -I_4 \times I_2 \right\}.
		\end{equation*}
		This group is isomorphic to the quaternion group of order 8. We define $G$ as the normalizer $G := \rN_{\mathrm{SL}_{10}(\bC)}(Q)$. Then we have
		\begin{gather*}
		G^\circ = \rZ_{\mathrm{SL}_{10}(\bC)}(Q) = \left( \rZ(\mathrm{GL}_2(\bC)) \times \mathrm{GL}_2(\bC)^4 \right) \cap \mathrm{SL}_{10}(\bC), \\
		G/G^\circ \cong (\bZ/2\bZ)^2, \\
		\rZ(G^\circ) = \left\{ (z_j)_{j=1}^{5} \in \rZ(\mathrm{GL}_2(\bC))^5 \left\mid \ \prod_{j=1}^{5} z_j^2 = 1 \right\}. \right.
		\end{gather*}
		Using the explicit representatives for $G/G^{\circ}$ given in~\cite[Example 3.2]{Aubert2018}, one can check that $\rZ_G(\rZ(G^{\circ})^\circ)=G^{\circ}$, so that the maximal special Levi subgroup for $G$ is $G^{\circ}$.
		
Let
\[
x = \mathrm{diag} \left( 0_2, \begin{pmatrix} 0 & 1 \\ 0 & 0 \end{pmatrix}, \begin{pmatrix} 0 & 1 \\ 0 & 0 \end{pmatrix}, \begin{pmatrix} 0 & 1 \\ 0 & 0 \end{pmatrix}, \begin{pmatrix} 0 & 1 \\ 0 & 0 \end{pmatrix} \right) \in \nn_G;
\]
we have
\[
\rA_{G^\circ}(x) \cong \bZ/2\bZ, \quad A_G(x) \cong Q.
\]
Let us assume $\cha ( \bk ) \neq 2$, so that $\rA_{G^\circ}(x)$ has a nontrivial $1$-dimensional representation. If $\cE$ is the corresponding $G^\circ$-equivariant local system on $\cO=G^\circ \cdot x$, then $(\cO,\cE)$ is a cuspidal pair for $G^\circ$.
		We therefore have the cuspidal triple $(G^{\circ}, \cO, \cE)$, with $W_{(G^{\circ}, \cO, \cE)} = G/G^\circ \cong (\bZ/2\bZ)^2$.
		
The perverse sheaf $\ind_{G^{\circ}\subseteq G^{\circ}}^G (\IC(\cO,\cE))$ identifies with the intersection cohomology complex of the pair $(G \cdot x,\mathcal{F})$ where $\mathcal{F}$ is the $G$-equivariant local system associated with the representation of $\rA_G(x)$ obtained by induction from the sign character of $\rA_{G^\circ}(x)$. The latter representation is either simple (but not absolutely simple) of dimension $4$, or the direct sum of two copies of a simple (and absolutely simple) $2$-dimensional representation. In any case, $\fR_{G,\bk}^{(G^{\circ}, \cO, \cE)}$ has only one element, while $(\bZ/2\bZ)^2$ has 4 isomorphism classes of simple representations, so that the corresponding twisted group algebra of $W_{(G^{\circ}, \cO, \cE)} \cong (\bZ/2\bZ)^2$ must be nontrivial. (In case $\ind_{G^{\circ}\subseteq G^{\circ}}^G (\IC(\cO,\cE))$ is not simple, this twisted group algebra is isomorphic to $\mathrm{M}_2(\bk)$.)

		
	\end{example}

%


	\subsection{An example}

We conclude this paper with an explicit description of our results in the case discussed in Example~\ref{ex:weyl}. Here there are two nilpotent orbits: the orbit of $0$, which satisfies $\rA_G(0)=\mathfrak{S}_3$, and the orbit of $x = \begin{pmatrix} 0 & 1 \\ 0 & 0 \end{pmatrix}$, where we have $\rA_G(x)=(\bZ/2\bZ) \times \mathfrak{S}_3$. There are also two conjugacy classes of special Levi subgroups, namely $T \times \mathfrak{A}_3$ and $G$. We have $\rN_G(T \times \mathfrak{A}_3) / (T \times \mathfrak{A}_3) \cong (\bZ/2\bZ)^2$. In all cases the cocycles from Theorem~\ref{thm:parind} will be trivial by the considerations in Remark~\ref{rmk:cocycle}.

\subsubsection{Characteristics not $2$ or $3$}
\label{sss:char-not-23}

Let us assume that $\mathrm{char}(\bk) \notin \{2,3\}$. Then $\mathfrak{S}_3$ has $3$ isomorphism classes of simple representations (those of the trivial representation, the sign representation, and a $2$-dimensional simple representation), giving rise to $3$ simple perverse sheaves denoted $(\IC_i : i \in \{1, 2, 3\})$ respectively, and $(\bZ/2\bZ) \times \mathfrak{S}_3$ has $6$ isomorphism classes of simple representations (obtained by tensoring the simple representations of $\mathfrak{S}_3$ either by the trivial representation or by the sign representation). We will denote by $(\IC_i : i \in \{4, 5, 6\})$ the simple perverse sheaves associated with the trivial representation of $\bZ/2\bZ$ (taking the representations of $\mathfrak{S}_3$ in the same order as above) and by $(\IC_i : i \in \{7, 8, 9\})$ those associated with the sign representation.

Let us assume that $\bk$ admits a nontrivial third root of unity. The special Levi subgroup $T \times \mathfrak{A}_3$ has a unique nilpotent orbit $\{0\}$, but $3$ cuspidal pairs corresponding to the $3$ isomorphism classes of simple $\mathfrak{A}_3$-representations. The $2$ nontrivial representations are conjugate under the action of the normalizer in $G$, so only one needs to be considered. For the trivial representation, the associated simple perverse sheaf is the skyscraper sheaf $\delta_0$ with trivial equivariant structure, and denoting by $B \subseteq \mathrm{SL}_2(\bC)$ the Borel subgroup of upper-triangular matrices we have
\[
\ind_{T \times \mathfrak{A}_3 \subset B \times \mathfrak{A}_3}^G(\delta_0) = \ind_{G^\circ \times \mathfrak{A}_3 \subseteq G^\circ \times \mathfrak{A}_3}^G(\ind_{T \subseteq B}^{G^\circ}(\delta_0) \boxtimes \bk).
\]
Here $\ind_{T \subseteq B}^{G^\circ}(\delta_0)$ is the Springer sheaf, which is the direct sum of the skyscraper sheaf $\delta_0$ (now seen as a perverse sheaf on $\nn_{G^\circ}$) and the simple perverse sheaf $\IC(G^\circ \cdot x, \underline{\bk})$. One easily checks using the considerations in the proof of Lemma~\ref{lem:ind-res-ss}\eqref{it:ind-exact-ss} that
\[
\ind_{G^\circ \times \mathfrak{A}_3 \subseteq G^\circ \times \mathfrak{A}_3}^G(\delta_0) \cong \IC_1 \oplus \IC_2, \quad \ind_{G^\circ \times \mathfrak{A}_3 \subseteq G^\circ \times \mathfrak{A}_3}^G(\IC(G^\circ \cdot x, \underline{\bk})) \cong \IC_4 \oplus \IC_5.
\]
This induction series therefore has $4$ elements, corresponding to the $4$ isomorphism classes of simple representations of $(\bZ/2\bZ)^2$.

Similarly, one sees that the induction series associated with a nontrivial character of $\mathfrak{A}_3$ consists of $\IC_3$ and $\IC_6$, corresponding to the $2$ simple representations of the stabilizer $\bZ/2\bZ$.

On the other hand $G$ has three cuspidal pairs, corresponding to the simple perverse sheaves $\IC_7$, $\IC_8$ and $\IC_9$, which form 3 other induction series.

\subsubsection{Characteristic $2$}

Let us now assume that $\mathrm{char}(\bk)=2$. Then $\mathfrak{S}_3$ has $2$ isomorphism classes of simple representations (the trivial representation and a $2$-dimensional simple representation), corresponding to two simple perverse sheaves $\IC_1$ and $\IC_2$ supported on $\{0\}$, and $(\bZ/2\bZ) \times \mathfrak{S}_3$ has $2$ simple representations (inflated from the simple representations of $\mathfrak{S}_3$), corresponding to two simple perverse sheaves $\IC_3$ and $\IC_4$ with full support.

Assuming again that $\bk$ admits a nontrivial third root of unity,
as above $T \times \mathfrak{A}_3$ has $3$ cuspidal pairs, but we need only consider $2$. For the trivial one, using the fact that the Springer sheaf has socle the skyscraper sheaf $\delta_0$, one can check that the corresponding induction series only contains $\IC_1$, corresponding to the unique simple representation of $(\bZ/2\bZ)^2$. For the other one the induction series only contains $\IC_2$, corresponding to the unique simple representation of $\bZ/2\bZ$. The simple perverse sheaves $\IC_3$ and $\IC_4$ are cuspidal, and form two other induction series.

\subsubsection{Characteristic $3$}

Finally, let us assume that $\mathrm{char}(\bk)=3$. Then $\mathfrak{S}_3$ has $2$ isomorphism classes of simple representations (the trivial and sign representations), corresponding to simple perverse sheaves $\IC_1$, $\IC_2$, and $(\bZ/2\bZ) \times \mathfrak{S}_3$ has $4$ isomorphism classes of simple representations, corresponding to simple perverse sheaves $\IC_3$, $\IC_4$, $\IC_5$, $\IC_6$ (using the same logic as above for the numbering).

The special Levi subgroup $T \times \mathfrak{A}_3$ has one cuspidal pair, corresponding to the skyscraper sheaf $\delta_0$ with trivial equivariant structure. The Springer sheaf is the sum of $2$ simple perverse sheaves as in~\S\ref{sss:char-not-23}, and one obtains that the associated induction series consists of $\IC_1$, $\IC_2$, $\IC_3$ and $\IC_4$, corresponding to the $4$ simple representations of $(\bZ/2\bZ)^2$. The simple perverse sheaves $\IC_5$ and $\IC_6$ are cuspidal, and form two other induction series.

	\printbibliography
	
\end{document}